\newtheorem{theorem}{Theorem}[section]
\newtheorem{lemma}[theorem]{Lemma}
\newtheorem{proposition}[theorem]{Proposition}
\theoremstyle{definition}
\newtheorem{definition}[theorem]{Definition}
\theoremstyle{remark}
\numberwithin{equation}{section}
\begin{document}

\begin{abstract}                          
In this paper we show that incrementally stable nonlinear time--delay systems admit symbolic models which are approximately equivalent, in the sense of approximate bisimulation, to the original system. An algorithm is presented which computes the proposed symbolic models. Termination of the algorithm in a finite number of steps is guaranteed by a boundedness assumption on the state and input spaces of the system.
\end{abstract}

\title[Symbolic models for nonlinear time-delay systems using approximate bisimulations]{Symbolic models for nonlinear time-delay systems using approximate bisimulations}
\thanks{This work has been partially supported by the Center of Excellence for Research DEWS, University of L'Aquila, Italy and by the National Science Foundation CAREER award 0717188.}

\author[Giordano Pola, Pierdomenico Pepe, Maria D. Di Benedetto and Paulo Tabuada]{
Giordano Pola$^{1}$, Pierdomenico Pepe$^{1}$, Maria D. Di Benedetto$^{1}$ and Paulo Tabuada$^{2}$}
\address{$^{1}$
Department of Electrical and Information Engineering, Center of Excellence DEWS,
University of L{'}Aquila, Poggio di Roio, 67040 L{'}Aquila, Italy}
\email{ \{giordano.pola,pierdomenico.pepe,mariadomenica.dibenedetto\}@univaq.it}
\urladdr{
http://www.diel.univaq.it/people/pola/
}
\urladdr{
http://www.diel.univaq.it/people/pepe/
}
\urladdr{
http://www.diel.univaq.it/people/dibenedetto/
}

\address{$^{2}$Department of Electrical Engineering\\
University of California at Los Angeles,
Los Angeles, CA 90095}
\email{tabuada@ee.ucla.edu}
\urladdr{http://www.ee.ucla.edu/$\sim$tabuada}

\maketitle

\section{Introduction}

Symbolic models have been the object of intensive study in the last few years since they provide a tool for mitigating complexity in the analysis and control of large scale systems \cite{TACsymbolicmodels}. In particular, they enable a correct-–by-–design approach to the synthesis of embedded control software, see e.g. \cite{LTLControl,TabuadaTAC08}. The key idea in this approach is to regard the synthesis of software as a control problem to be solved in conjunction with the synthesis of the control algorithms. 
Central to this approach is the possibility to construct symbolic models that approximately describe continuous control systems. Symbolic models are abstract mathematical models where each symbolic state and each symbolic label represent an aggregation of continuous states and an aggregation of input signals in the original continuous model. 
Many researchers have recently faced the problem of identifying classes of control systems admitting symbolic models. For example, controllable linear control systems and incrementally stable nonlinear control systems were shown in \cite{LTLControl} and respectively in \cite{PolaAutom2008}, to admit symbolic models. In the work of \cite{Belta:06} symbolic models for multi--affine systems are proposed and benefits from their use in solving control problems arising in systems biology and robot motion planning have been shown in \cite{Batt:08} and in \cite{Belta:05}, respectively. Nonlinear switched systems have been considered in \cite{Girard_et_al_HSCC08} and applications to the digital control of the boost DC--DC converter have been investigated. One challenge in this research line is to enlarge the class of systems admitting symbolic models. In this paper we make a further step along this direction by focusing on the class of time--delay systems. Time--delay systems are an important class of dynamical systems, arising in many application domains of interest ranging from biology, chemical, electrical, and mechanical engineering, to economics (see e.g. \cite{Niculescu:01,surveyTDS,TDSAQ}). \\
In this paper we generalize the results of the work in \cite{PolaAutom2008} from nonlinear control systems to nonlinear time--delay systems. The main contribution of this paper lies in showing that incrementally stable nonlinear time--delay systems do admit symbolic models. An algorithm is presented which computes the proposed symbolic models in a finite number of steps, provided that the sets of states and inputs of the time--delay system are bounded. 
In addition to theoretical relevance, the importance of this result resides in the offer of an alternative design methodology 
based on symbolic models\footnote{See e.g. \cite{LTLControl,TabuadaTAC08} for symbolic models--based control of linear and nonlinear control systems.} to the control design of \textit{nonlinear} time--delay systems, which is at the present a difficult task to deal with, by using current methodologies \cite{surveyTDS}. \\
In the following we will use a notation which is standard within both the control and computer science community. However for the sake of completeness, a detailed list of the employed notation is included in the Appendix (Section \ref{sec:notation}).

\section{Time--Delay Systems} 

In this paper we consider the following nonlinear time--delay system:
\begin{equation}
\label{TDS}
\left\{
\begin{array}{lll}
\dot{x}(t)=f(x_{t},u(t-r)),& t \in \mathbb{R}^{+}, a.e. \\
x(t)=\xi_{0}(t), & t \in [-\Delta,0], 
\end{array}
\right.
\end{equation}
where $\Delta\in \mathbb{R}^{+}_{0}$ is the maximum involved state delay, $r\in\mathbb{R}^{+}_{0}$ is the input delay, $x(t)\in X \subseteq \mathbb{R}^{n}$, $x_{t}\in \mathcal{X}\subseteq C^{0}([-\Delta,0];X)$, $u(t)\in U\subseteq \mathbb{R}^{m}$ is the control input at time $t\in [-r,+\infty [$ , $\xi_{0}\in \mathcal{X}$ is the initial condition, $f$ is a functional from $\mathcal{X}\times U$ to $\mathcal{X}$. We denote by $\mathcal{U}$ the class of control input signals and we suppose that $\mathcal{U}$ is a subset of the set of all measurable and locally essentially bounded functions of time from $[-r,+\infty[$ to $U$. Moreover we suppose that $f$ is Lipschitz on bounded sets, i.e. for every bounded set
$K\subset\mathcal{X} \times U$, there exists a constant $\kappa>0$ such that 
\[
\Vert f(x_{1},u_{1})-f(x_{2},u_{2})\Vert\leq \kappa(\Vert x_{1}-x_{2}\Vert_{\infty} + \Vert u_{1}-u_{2} \Vert),
\]
for all $(x_{1},u_{1}),(x_{2},u_{2})\in K$. Without loss of generality we assume \mbox{$f(0,0) = 0$}, thus ensuring that $x(t) = 0$ is the trivial solution for the unforced system $\dot{x}(t) = f(x_{t}, 0)$. Multiple discrete non--commensurate as well as distributed delays
can appear in (\ref{TDS}). Assumptions on $f$ ensure existence and uniqueness of the solutions of the differential equation in (\ref{TDS}). 
In the following $x(t,\xi_0,u)$ and $x_t(\xi_0,u)$ will denote the solutions in $X$ and respectively in $\mathcal{X}$, of the time--delay system with initial condition $\xi_0$ and input $u\in \mathcal{U}$, at time $t$. A time--delay system is said to be \textit{forward complete} if every solution is defined on $[0,+\infty [$. In the further developments we refer to a time--delay system as in (\ref{TDS}) by means of the tuple:
\[
\Sigma=(X,\mathcal{X},\xi_{0},U,\mathcal{U},f),
\]
where each entity has been defined before.

\section{Incremental Stability}\label{sec:stab}

The results presented in this paper will assume certain stability assumptions that we introduce in this section. The following definition has been obtained as a natural generalization of the one in \cite{IncrementalS}.

\begin{definition}

A time--delay system $\Sigma=(X,\mathcal{X},\xi_{0},U,\mathcal{U},f)$ is 
\textit{incrementally input--to--state stable} (\mbox{$\delta$--ISS}) if it is
forward complete and there exist a $\mathcal{KL}$ function $\beta$ and a
$\mathcal{K}$ function $\gamma$ such that for any time $t\in\mathbb{R}^{+}_{0}$, any initial conditions
$\xi_{1},\xi_{2}\in\mathcal{X}$ and any inputs $u_{1},u_{2}\in\mathcal{U}$ the following inequality holds:
\begin{equation}
\left\Vert x_{t}(\xi_{1},u_{1})-x_{t}(\xi_{2}%
,u_{2})\right\Vert_{\infty} \leq \beta(\left\Vert \xi_{1}-\xi_{2}\right\Vert_{\infty}
,t)+\gamma(\left\Vert (u_{1}-u_{2})|_{[-r,t-r)} \right\Vert _{\infty}).
\label{deltaISS}
\end{equation}
\end{definition}
The above definition can be thought of as an incremental version
of the notion of input--to--state stability (ISS). 
In general, inequality in (\ref{deltaISS}) is difficult
to check directly. We therefore provide hereafter a characterization of $\delta$--ISS, in terms of Liapunov--Krasovskii functionals (see \cite{PepeJiangSCL}, as far as the ISS is concerned).

\begin{definition}
Given a time--delay system $\Sigma=(X,\mathcal{X},\xi_{0},U,\mathcal{U},f)$, a locally Lipschitz functional 
$
V:C^{0}([-\Delta,0];\mathbb{R}^{n})\times C^{0}([-\Delta,0];\mathbb{R}^{n})\to \mathbb{R}^{+}
$ 
is said to be a $\delta$--ISS Liapunov--Krasovskii functional for $\Sigma$ if there exist $\mathcal{
K}_{\infty}$ functions $\alpha_1, \alpha_2$ and $\mathcal{K}$ functions $\alpha_3$, $\rho$ such that:

\begin{itemize}

\item[(i)] for all $x_{1},x_{2}\in C^{0}([-\Delta,0];\mathbb{R}^{n})$
\[
\alpha_1(\Vert x_{1}(0)-x_{2}(0) \Vert)\le V(x_1,x_2)\le \alpha_2(M_a(x_1-x_2)),
\]
where $M_{a}:C^{0}([-\Delta,0];\mathbb{R}^{n}) \to \mathbb{R}^+$ is a continuous functional such that 
\[
\underline\gamma_{a}(\Vert x(0)\Vert )\le M_{a}(x)\le \overline \gamma_{a}(\Vert x\Vert_{\infty}), \ \forall x\in C^{0}([-\Delta,0];\mathbb{R}^{n}),\nonumber
\]
for some $\mathcal{K}_{\infty}$ functions $\underline \gamma_{a}$ and
$\overline \gamma_{a}$;

\item[(ii)] for all $x_{1},x_{2}\in C^{0}([-\Delta,0];\mathbb{R}^{n})$ and $u_{1},u_{2}\in \mathbb{R}^{m}$ for which \mbox{$M_a(x_1-x_2)\ge$} 
$\rho(\Vert u_1-u_{2}\Vert)$ the following inequality holds:
\[
D^+V(x_1,x_2,u_1,u_2)\le -\alpha_3(M_a(x_1-x_2)),
\]
where $D^+V(x_1,x_2,u_1,u_2)$ is the derivative of functional $V$ in the formulation proposed by Driver \cite{Driver62}, i.e.
\[
D^+V(x_1,x_{2},u_{1},u_{2})=\limsup_{\theta\to 0^+} \frac {V(x^\theta_1,x^\theta_2)-V(x_1,x_2)}{\theta}, 
\]
where $x_i^\theta(s)=x_i(s+\theta)$, if $s\in [-\Delta,-\theta[$ and $x_i^\theta(s)=x_i(0)+$\mbox{$(s+\theta)f(x_i,u_i)$}, if $s\in [-\theta,0]$.
%
\end{itemize}
\end{definition}


\begin{theorem} 
A time--delay system $\Sigma$ is $\delta$--ISS if it admits a $\delta$--ISS Liapunov--Krasovskii functional.
\end{theorem}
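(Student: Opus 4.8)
The plan is to run the standard Liapunov--Krasovskii argument for ISS, adapted to the incremental setting by evaluating the candidate functional along a \emph{pair} of trajectories. Fix $\xi_1,\xi_2\in\mathcal{X}$ and $u_1,u_2\in\mathcal{U}$, write $x_{i,t}:=x_t(\xi_i,u_i)$, and set $W(t):=V(x_{1,t},x_{2,t})$. Since $V$ is locally Lipschitz and the trajectories are absolutely continuous, $W$ is absolutely continuous and its upper right Dini derivative satisfies, for almost every $t\ge 0$,
\[
D^+W(t)\le D^+V\big(x_{1,t},x_{2,t},u_1(t-r),u_2(t-r)\big),
\]
which is precisely the content of Driver's formulation: the segment is shifted and its newest value filled in through $f$, matching the actual dynamics $\dot{x}(t)=f(x_t,u(t-r))$. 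The final estimate will then follow from a decay estimate on $W$ together with the sandwich bounds in (i). The step I expect to be most delicate is the last one: the lower bound in (i) only controls the \emph{instantaneous} difference $\Vert x_1(t)-x_2(t)\Vert$, whereas $\delta$--ISS is stated through the \emph{segment} norm $\Vert x_{1,t}-x_{2,t}\Vert_\infty=\sup_{s\in[-\Delta,0]}\Vert x_1(t+s)-x_2(t+s)\Vert$, so a separate argument is needed to recover the supremum over the delay window.

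First I would turn condition (ii) into a genuine dissipation inequality for $W$. From $V\le\alpha_2(M_a(\cdot))$ one has $M_a(x_{1,t}-x_{2,t})\ge\alpha_2^{-1}(W(t))$, so whenever $W(t)\ge \alpha_2\big(\rho(\Vert u_1(t-r)-u_2(t-r)\Vert)\big)$ the threshold in (ii) is met and
\[
D^+W(t)\le -\alpha_3\big(M_a(x_{1,t}-x_{2,t})\big)\le -\alpha_3\big(\alpha_2^{-1}(W(t))\big)=:-\alpha(W(t)),
\]
with $\alpha:=\alpha_3\circ\alpha_2^{-1}\in\mathcal{K}$. Writing $s^\star:=\Vert (u_1-u_2)|_{[-r,t-r)}\Vert_\infty$ and $c:=\alpha_2(\rho(s^\star))$, and noting $s^\star$ is nondecreasing in $t$, this reads $D^+W\le-\alpha(W)$ on the set $\{W\ge c\}$.

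Next I would invoke the comparison lemma in its ISS form (as in \cite{PepeJiangSCL}): from $D^+W\le-\alpha(W)$ whenever $W\ge c$ one obtains a class-$\mathcal{KL}$ function $\widehat\beta$, depending only on $\alpha$, with $W(t)\le\max\{\widehat\beta(W(0),t),c\}\le\widehat\beta(W(0),t)+c$ for all $t\ge0$. Using $W(0)=V(\xi_1,\xi_2)\le\alpha_2(\overline\gamma_a(\Vert\xi_1-\xi_2\Vert_\infty))$, the lower bound $\alpha_1(\Vert x_1(t)-x_2(t)\Vert)\le W(t)$, and monotonicity of $\alpha_1^{-1}$, I get the pointwise estimate
\[
\Vert x_1(\tau)-x_2(\tau)\Vert\le \alpha_1^{-1}\big(\widehat\beta(\alpha_2(\overline\gamma_a(\Vert\xi_1-\xi_2\Vert_\infty)),\tau)\big)+\alpha_1^{-1}(c)
\]
for every $\tau\ge0$, while $\Vert x_1(\tau)-x_2(\tau)\Vert\le\Vert\xi_1-\xi_2\Vert_\infty$ on $[-\Delta,0]$. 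To pass to the segment norm I would take the supremum over $\tau\in[t-\Delta,t]$: the input term $\alpha_1^{-1}(c)$ is unchanged, the $\mathcal{KL}$ term is maximized at $\tau=\max\{t-\Delta,0\}$ by monotonicity in time, and the portion of the window lying in $[-\Delta,0]$ contributes at most $\Vert\xi_1-\xi_2\Vert_\infty$. Absorbing the time shift by $\Delta$ and this initial-segment contribution into a single class-$\mathcal{KL}$ function $\beta$ yields
\[
\Vert x_{1,t}-x_{2,t}\Vert_\infty\le\beta(\Vert\xi_1-\xi_2\Vert_\infty,t)+\gamma(s^\star),
\qquad \gamma:=\alpha_1^{-1}\circ\alpha_2\circ\rho\in\mathcal{K},
\]
which is exactly (\ref{deltaISS}).

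Finally, forward completeness, required in the definition of $\delta$--ISS, is secured along the way: taking $\xi_2=0$, $u_2=0$ and using $f(0,0)=0$, the trivial trajectory $x(t,0,0)\equiv0$ is a solution, so the estimate above bounds $\Vert x_1(t)\Vert$ on every bounded time interval and rules out finite escape time, giving existence of solutions on $[0,+\infty[$.
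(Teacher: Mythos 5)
Your proposal is correct and follows essentially the same route as the paper's proof: the standard Sontag ISS argument run on the pair functional $W(t)=V(x_{1,t},x_{2,t})$, namely the dissipation inequality $D^+W\le-\alpha_3\circ\alpha_2^{-1}(W)$ above the input-dependent threshold, a $\mathcal{KL}$ decay estimate plus the gain term $\alpha_1^{-1}\circ\alpha_2\circ\rho$ (the paper packages this via forward invariance of the sublevel set $S$, you via the max-form comparison lemma --- the same content), recovery of the segment norm by taking the supremum over the delay window with the $\max\{0,t-\Delta\}$ shift and absorbing the initial-segment contribution into a single $\mathcal{KL}$ function, and causality to pass to the truncated input norm. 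The only technicality you gloss over, which the paper handles by restricting to $C^1$ initial conditions and citing prior results, is that for merely continuous initial data the map $t\mapsto x_t$ need not be locally Lipschitz near $t=0$, so the absolute continuity of $W$ requires a density or approximation argument rather than being immediate.
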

\begin{proof}
As pointed out in \cite{IncrementalS}, the same lines of the proof
used by Sontag for ISS, used also for time--delay systems in
\cite{PepeJiangSCL}, can be used here. 
Briefly, by results in \cite{PepeTACABS,PepeSAFEAUTOMATICA}
, let
$\phi_1,\phi_2 \in C^{1}([-\Delta,0];\mathbb{R}^{n})$ be a pair of initial conditions,
$u_1,u_2$ a pair of input functions. Let $\Vert
u_1-u_2\Vert_{\infty} = v$. It can be proved that the set 
$
S=\{(\psi_1,\psi_2)\in $ $C^{0}([-\Delta,0];\mathbb{R}^{n})\times C^{0}([-\Delta,0];\mathbb{R}^{n}):
V(\psi_1,\psi_2)\le \alpha_2\circ \rho(v) \}
$ 
is forward invariant, i.e., if $(x_{t_0}(\phi_1,u_1),
x_{t_0}(\phi_2,u_2))\in S$ for some $t_0\in\mathbb{R}^{+}_{0}$, then \mbox{$(x_{t}(\phi_1,u_1),$} $x_{t}(\phi_2,u_2))\in S$ for all $t\ge t_0$. In the
interval $[0,t_0)$ with $t_{0}\in\mathbb{R}^{+}$, where
$(x_{t}(\phi_1,u_1), x_{t}(\phi_2,u_2))$, eventually, does not
belong to $S$, the inequality in (ii) holds, which results for
$w(t)=V(x_t(\phi_1,u_1),$ $x_t(\phi_2,u_2))$ in $D^+w(t)\le -\alpha_3\circ \alpha_2^{-1}(w(t))$ a.e., 
from which, by inequalities in (i), the following inequality
holds, for a suitable $\mathcal{KL}$ function $\bar \beta$,
\begin{eqnarray}
&&
\Vert x(t,\phi_1,u_1)-x(t,\phi_2,u_2)\Vert\le \nonumber \alpha_1^{-1}\circ \bar \beta (\alpha_2\circ \overline
\gamma_a(\Vert \phi_1-\phi_2 \Vert_{\infty}),t).
\end{eqnarray}
By the result concerning the set $S$, the following inequality holds:
\[
\Vert x(t,\phi_1,u_1)-x(t,\phi_2,u_2)\Vert \le \alpha_1^{-1}\circ \bar \beta (\alpha_2\circ \overline
\gamma_a(\Vert \phi_1-\phi_2\Vert_{\infty}),t)+\alpha_1^{-1}\circ \alpha_2\circ \rho(v).
\]
From the above inequality, one gets:
\[
\begin{array}
{rcl}
\Vert x_t(\phi_1,u_1)-x_t(\phi_2,u_2)\Vert_{\infty}  & \le & e^{-(t-\Delta)}\Vert \phi_1-\phi_2\Vert_{\infty}\nonumber\\
&
+
&
\alpha_1^{-1}\circ \bar \beta (\alpha_2\circ \overline
\gamma_a(\Vert \phi_1-\phi_2\Vert_{\infty}),\max
\{0,t-\Delta\})\nonumber\\
&
+
&
\alpha_1^{-1}\circ \alpha_2\circ \rho(v)
\nonumber
\end{array}\nonumber
\]
and by causality arguments, the inequality in (\ref{deltaISS}) is proved.
\end{proof}

At the present it is not known whether existence of $\delta$--ISS Liapunov--Krasovskii functional is also a necessary condition for a time--delay system to be $\delta$--ISS. Sufficient and necessary conditions for a time--delay system to be ISS, in terms of existence of ISS Liapunov--Krasovskii functionals can be found in \cite{PepeEJC08}.

\section{Symbolic Models and Approximate Equivalence\label{sec4}}
In this paper we use transition systems as abstract mathematical models of time--delay systems. 
\begin{definition}
A transition system is a sixtuple 
$
\mbox{$T=(Q,q_{0},L,\rTo,O,H)$}, 
$ 
consisting of:
\begin{itemize}
\item A set of states $Q$;
\item An initial state $q_{0}\in Q$;
\item A set of labels $L$;
\item A transition relation $\rTo\subseteq Q\times L\times Q$;
\item An output set $O$;
\item An output function $H:Q\rightarrow O$.
\end{itemize}
A transition system $T$ is said to be: 
\textit{metric}, if the output set $O$ is equipped with a metric \mbox{$\mathbf{d}:O\times O\rightarrow\mathbb{R}_{0}^{+}$}; 
\textit{countable}, if $Q$ and $L$ are countable sets; 
\textit{finite/symbolic}, if $Q$ and $L$ are finite sets. 
\end{definition}
We will follow standard practice and denote an element $(q,l,p)\in
\rTo$ by $q\rTo^{l} p$. Transition systems
capture dynamics through the transition relation. For any states \mbox{$q,p\in Q$},
$q\rTo^{l} p$ simply means that it is possible to evolve from state $q$ to state $p$ under the action labeled by $l$. 
In this paper we will show how to construct symbolic models that are approximately equivalent to $\Sigma$. The notion of 
equivalence that we consider is the one of \textit{bisimulation equivalence} \cite{Milner,Park}. Bisimulation relations are standard mechanisms to relate
the properties of transition systems. Intuitively, a
bisimulation relation between a pair of transition systems $T_{1}$ and $T_{2}$
is a relation between the corresponding sets of states explaining how a state trajectory $s_{1}$ of $T_{1}$ can be transformed into a state 
trajectory $s_{2}$ of $T_{2}$ and vice versa. While typical bisimulation
relations require that $s_{1}$ and $s_{2}$ are observationally
indistinguishable, that is $H_{1}(s_{1})=H_{2}(s_{2})$, we shall relax this
by requiring $H_{1}(s_{1})$ to simply be close to $H_{2}(s_{2})$ where
closeness is measured with respect to the metric on the output set. The
following notion has been introduced in \cite{AB-TAC07} and in a
slightly different formulation in \cite{TabuadaTAC08}.

\begin{definition}
\label{ASR}Let $T_{1}=(Q_{1},q_{1}^{0},L_{1},\rTo_{1},O,H_{1})$ and
$T_{2}=(Q_{2},q_{2}^{0},L_{2},\rTo_{2},$ $O,H_{2})$ be metric transition systems
with the same output set $O$ and metric $\mathbf{d}$, and let \mbox{$\varepsilon \in \mathbb{R}_{0}^{+}$} 
be a given precision.
A relation $R\subseteq Q_{1}\times Q_{2}$ is said to be an \mbox{$\varepsilon
$--\textit{approximate}} bisimulation relation between $T_{1}$ and $T_{2}$, if for
any $(q_{1},q_{2})\in R$:
\begin{itemize}
\item[(i)] $\mathbf{d}(H_{1}(q_{1}),H_{2}(q_{2}))\leq\varepsilon$;
\item[(ii)] $q_{1}\rTo^{l_{1}}_{1} p_{1}$ implies 
existence of $q_{2}\rTo^{l_{2}}_{2} p_{2}$ such that
$(p_{1},p_{2})\in R$;
\item[(iii)] $q_{2}\rTo^{l_{2}}_{2} p_{2}$ implies 
existence of $q_{1}\rTo^{l_{1}}_{1} p_{1}$ such that
$(p_{1},p_{2})\in R$.
\end{itemize}
Moreover $T_{1}$ is said to be $\varepsilon$\textit{--bisimilar} to $T_{2}$ if:
\begin{itemize}
\item [(iv)]there exists an \mbox{$\varepsilon$--approximate} bisimulation relation $R$\ between $T_{1}$ and $T_{2}$ such that \mbox{$(q_{1}^{0},q_{2}^{0})\in R$}.
\end{itemize}
\end{definition}

\section{Approximately Bisimilar Symbolic Models\label{sec6}}
In this paper we consider time--delay systems with digital controllers, i.e. time--delay systems where control inputs are piecewise--constant. In many concrete applications controllers are implemented through digital devices and this motivates our interest for this class of control systems. 
In the following we refer to time--delay systems with digital controllers as \textit{digital time--delay systems}. 
From now on we suppose that the set $U$ of input values 
contains the origin and that it is a hyper rectangle of the form $U:= [a_1,b_1] \times [a_2,b_2] \times ... \times [a_m,b_m]$, for some $a_i<b_i, i=1,2,...,m$. Furthermore given \mbox{$\tau\in\mathbb{R}^{+}$}, 
we consider the following class of control inputs:%
\begin{equation}
\mathcal{U}_{\tau}:=
\left\{
\begin{array}
[c]{c}
u\in\mathcal{U}: \textrm{the time domain of } u \textrm{ is } [-r,-r+\tau] \\
\textrm{ and } u(t)=u(-r),t\in\lbrack -r,-r+\tau]
\end{array}
\right\}.
\label{Utau}
\end{equation}
Given $k\in\mathbb{R}^{n}$ we denote by $\mathcal{U}_{k,\tau}$ the class of control inputs obtained by the concatenation of $k$ control inputs in $\mathcal{U}_{\tau}$. Given a digital time--delay system $\Sigma$ define the transition system 
$
T_{\tau}(\Sigma):=(Q_{1},q_{1}^{0},L_{1},\rTo_{1},O_{1},H_{1}),
$ 
where:
\begin{itemize}
\item $Q_{1}=\mathcal{X}$;
\item $q_{1}^{0}=\xi_{0}$;
\item $L_{1}=\{l_{1}\in \mathcal{U}_{\tau}\,\,\vert\,\,x_\tau (x,l_{1})$ is defined for all $x\in\mathcal{X}\}$;
\item $q \rTo_{1}^{l_{1}} p$, if 
$x_{\tau}(q,l_{1})=p$;
\item $O_{1}=\mathcal{X}$;
\item $H_{1}=1_{\mathcal{X}}$.
\end{itemize}
Transition system $T_{\tau}(\Sigma)$ can be thought of as a time discretization of $\Sigma$. Transition system $T_{\tau}(\Sigma)$ is metric when we regard
$O_{1}=\mathcal{X}$ as being equipped with the metric \mbox{$\mathbf{d}(p,q)=\Vert p-q\Vert_{\infty}$}.
Note that transition system $T_{\tau}(\Sigma)$ is not symbolic, since the set of states $Q_{1}$ is a functional space. 
The construction of symbolic models for digital time--delay systems relies upon approximations of the set of reachable states and of the space of input signals. Given a digital time--delay system $\Sigma$ let $R_{\tau}(\Sigma)\subseteq \mathcal{X}$ be the set of reachable states of $\Sigma$ at times $t=0,\tau,...,k\tau,...$, i.e. the collection of all states $x\in\mathcal{X}$ for which there exist $k\in \mathbb{N}$ and a control input $u\in\mathcal{U}_{k,\tau}$ so that $x=x_{k\tau}(\xi_{0},u)$. The sets $R_{\tau}(\Sigma)$ and $\mathcal{U}_{\tau}$, corresponding to\footnote{In fact the set $Q_{1}$ of states of $T_{\tau}(\Sigma)$ is $\mathcal{X}$ and not $R_{\tau}(\Sigma)$. However, all states in $\mathcal{X} \backslash R_{\tau}(\Sigma)$ will be never reached and this is the reason why we will approximate $R_{\tau}(\Sigma)$ rather than $\mathcal{X}$.} $Q_{1}$ and $L_{1}$ in $T_{\tau}(\Sigma)$ are functional spaces and therefore are needed to be approximated, in the sense of the following definition.
\begin{definition}
\label{CountApprox}
Consider a functional space $\mathcal{Y}\subseteq C^{0}(I,Y)$ with $Y\subseteq\mathbb{R}^{n}$, \mbox{$I=[a,b]$}, $a,b\in \mathbb{R}$, $a<b$. A map \mbox{$\mathcal{A}:\mathbb{R}^{+}\rightarrow 2^{C^{0}(I,Y)}$} is a \textit{countable approximation} of $\mathcal{Y}$ if for any desired precision $\lambda\in\mathbb{R}^{+}$:
\begin{itemize}
\item [(i)] $\mathcal{A}(\lambda)$ is a countable set;
\item [(ii)] for any $y\in \mathcal{Y}$ there exists $z\in \mathcal{A}(\lambda)$ so that $\Vert y - z \Vert_{\infty} \leq \lambda$;
\item [(iii)] for any $z\in \mathcal{A}(\lambda)$ there exists $y\in \mathcal{Y}$ so that $\Vert y - z \Vert_{\infty} \leq \lambda$.
\end{itemize}
\end{definition}
A countable approximation $\mathcal{A}_{\mathcal{U}}$ of $\mathcal{U}_{\tau}$ can be easily obtained by defining for any $\lambda_{\mathcal{U}}\in\mathbb{R}^{+}$,
\begin{equation}
\mathcal{A}_{\mathcal{U}}(\lambda_{\mathcal{U}})=\{u\in\mathcal{U}_{\tau}: u(t)=u(-r)\in [U]_{2\lambda_{\mathcal{U}}} ,t\in\lbrack -r,-r+\tau]\},
\label{UtauQ}
\end{equation}
where $[U]_{2\lambda_{\mathcal{U}}}$ is defined as in (\ref{grid}). By comparing $\mathcal{U}_{\tau}$ in (\ref{Utau}) and $\mathcal{A}_{\mathcal{U}}(\lambda_{\mathcal{U}})$ in (\ref{UtauQ}) it is readily seen that $\mathcal{A}_{\mathcal{U}}(\lambda_{\mathcal{U}})\subset \mathcal{U}_{\tau}$ for any $\lambda_{\mathcal{U}}\in\mathbb{R}^{+}$. Under assumptions on $U$, the set $\mathcal{A}_{\mathcal{U}}(\lambda_{\mathcal{U}})$ is nonempty\footnote{For any $\lambda_{\mathcal{U}}\in\mathbb{R}^{+}$ the set $\mathcal{A}_{\mathcal{U}}(\lambda_{\mathcal{U}})$ contains at least the identically null input function.} for any $\lambda_{\mathcal{U}}\in\mathbb{R}^{+}$. The definition of countable approximations of the set of reachable states $R_{\tau}(\Sigma)$ is more involved since $R_{\tau}(\Sigma)$ is a functional space. Let us assume as a first step existence of a countable approximation $\mathcal{A}_{\mathcal{X}}$ of $R_{\tau}(\Sigma)$. (In the further development we will derive conditions ensuring existence and construction of $\mathcal{A}_{\mathcal{X}}$.) 
We now have all the ingredients to define a countable transition system that will approximate $T_{\tau}(\Sigma)$. 
Given any $\tau\in\mathbb{R}^{+}$, $\lambda_{\mathcal{X}}\in\mathbb{R}^{+}$ 
and $\lambda_{\mathcal{U}}\in\mathbb{R}^{+}$ define the following
transition system:
\begin{equation}
T_{\tau,\lambda_{\mathcal{X}},\lambda_{\mathcal{U}}}(\Sigma):=(Q_{2},q_{2}^{0},L_{2},\rTo_{2},O_{2},H_{2}),
\label{T2}%
\end{equation}
where:
\begin{itemize}
\item $Q_{2}=\mathcal{A}_{\mathcal{X}}(\lambda_{\mathcal{X}})$;
\item $q_{2}^{0}\in Q_{2}$ so that $\Vert \xi_{0}-q_{2}^{0} \Vert_{\infty}\leq\lambda_{\mathcal{X}}$;
\item $L_{2}=\mathcal{A}_{\mathcal{U}}(\lambda_{\mathcal{U}})$; 
\item $q \rTo^{l}_{2} p$, if $\left\Vert p-x_{\tau}(q,l)\right\Vert_{\infty} 
\leq \lambda_{\mathcal{X}}$;
\item $O_{2}=\mathcal{X}$;
\item $H_{2}=\imath : Q_{2} \hookrightarrow O_{2}$.
\end{itemize}
Parameters $\lambda_{\mathcal{X}}$ and $\lambda_{\mathcal{U}}$ can be thought of as quantizations of the set $R_{\tau}(\Sigma)$ and of the space $\mathcal{U}_{\tau}$, respectively. 
By construction, transition system in (\ref{T2}) is countable. We can now state the following result that relates $\delta$--ISS to the
existence of symbolic models for time--delay systems.
\begin{theorem}
Consider a digital time--delay system $\Sigma=(X,\mathcal{X},\xi_{0},U,\mathcal{U}_{\tau},f)$ and any desired precision $\varepsilon\in\mathbb{R}^{+}$. Suppose that 
$\Sigma$ is $\delta$--ISS and choose $\tau\in\mathbb{R}^{+}$ so that $\beta(\varepsilon,\tau)<\varepsilon$. Moreover suppose that there exists a countable approximation $\mathcal{A}_{\mathcal{X}}$ of $R_{\tau}(\Sigma)$. Then, for any $\lambda_{\mathcal{X}}\in\mathbb{R}^{+}$ and $\lambda_{\mathcal{U}}\in\mathbb{R}^{+}$ satisfying the following
inequality:%
\begin{eqnarray}
\beta(\varepsilon,\tau)+\gamma(\lambda_{\mathcal{U}})+ \lambda_{\mathcal{X}} \leq \varepsilon
\label{cond2}
\end{eqnarray}
transition systems $T_{\tau,\lambda_{\mathcal{X}},\lambda_{\mathcal{U}}}(\Sigma)$ and $T_{\tau}(\Sigma)$ are $\varepsilon
$--bisimilar.
\label{ThMain}
\end{theorem}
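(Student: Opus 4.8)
The plan is to exhibit an explicit $\varepsilon$--approximate bisimulation relation and verify conditions (i)--(iv) of Definition \ref{ASR}. The natural candidate is the relation tracking closeness in the output metric,
\[
R=\{(q_{1},q_{2})\in Q_{1}\times Q_{2}:\Vert q_{1}-q_{2}\Vert_{\infty}\leq\varepsilon\}.
\]
Condition (i) is immediate: since $H_{1}=1_{\mathcal{X}}$ and $H_{2}$ is the inclusion, $\mathbf{d}(H_{1}(q_{1}),H_{2}(q_{2}))=\Vert q_{1}-q_{2}\Vert_{\infty}\leq\varepsilon$ for every $(q_{1},q_{2})\in R$. Condition (iv) is also immediate, because $q_{1}^{0}=\xi_{0}$ and $q_{2}^{0}$ is chosen with $\Vert\xi_{0}-q_{2}^{0}\Vert_{\infty}\leq\lambda_{\mathcal{X}}$, and (\ref{cond2}) forces $\lambda_{\mathcal{X}}\leq\varepsilon$; hence $(q_{1}^{0},q_{2}^{0})\in R$.

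The core of the argument is condition (ii). Fix $(q_{1},q_{2})\in R$ and a transition $q_{1}\rTo_{1}^{l_{1}}p_{1}$, so that $p_{1}=x_{\tau}(q_{1},l_{1})$ with $l_{1}\in L_{1}\subseteq\mathcal{U}_{\tau}$. First I would use property (ii) of the countable approximation $\mathcal{A}_{\mathcal{U}}$ to select a label $l_{2}\in L_{2}=\mathcal{A}_{\mathcal{U}}(\lambda_{\mathcal{U}})$ with $\Vert l_{1}-l_{2}\Vert_{\infty}\leq\lambda_{\mathcal{U}}$. Next I would use property (ii) of $\mathcal{A}_{\mathcal{X}}$ to pick a state $p_{2}\in Q_{2}$ with $\Vert p_{2}-x_{\tau}(q_{2},l_{2})\Vert_{\infty}\leq\lambda_{\mathcal{X}}$; by the definition of $\rTo_{2}$ this is precisely the transition $q_{2}\rTo_{2}^{l_{2}}p_{2}$. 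It then remains to check $(p_{1},p_{2})\in R$. Applying the $\delta$--ISS inequality (\ref{deltaISS}) at time $\tau$, and using that $\beta$ is increasing in its first argument with $\Vert q_{1}-q_{2}\Vert_{\infty}\leq\varepsilon$ and that $\gamma$ is increasing with $\Vert(l_{1}-l_{2})|_{[-r,\tau-r)}\Vert_{\infty}\leq\lambda_{\mathcal{U}}$, one gets $\Vert x_{\tau}(q_{1},l_{1})-x_{\tau}(q_{2},l_{2})\Vert_{\infty}\leq\beta(\varepsilon,\tau)+\gamma(\lambda_{\mathcal{U}})$. A triangle inequality then yields
\[
\Vert p_{1}-p_{2}\Vert_{\infty}\leq\beta(\varepsilon,\tau)+\gamma(\lambda_{\mathcal{U}})+\lambda_{\mathcal{X}}\leq\varepsilon,
\]
the last step being exactly (\ref{cond2}); hence $(p_{1},p_{2})\in R$.

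Condition (iii) is the symmetric statement, and I expect it to be easier since $T_{\tau}(\Sigma)$ has the full state space as its state set. Given $(q_{1},q_{2})\in R$ and $q_{2}\rTo_{2}^{l_{2}}p_{2}$, I would simply replay the same label in $T_{\tau}(\Sigma)$: forward completeness guarantees $\mathcal{A}_{\mathcal{U}}(\lambda_{\mathcal{U}})\subseteq L_{1}$, so $q_{1}\rTo_{1}^{l_{2}}p_{1}$ with $p_{1}=x_{\tau}(q_{1},l_{2})$ is a genuine transition. Since both trajectories now use the same input the $\gamma$--term vanishes, and combining $\Vert x_{\tau}(q_{1},l_{2})-x_{\tau}(q_{2},l_{2})\Vert_{\infty}\leq\beta(\varepsilon,\tau)$ with $\Vert x_{\tau}(q_{2},l_{2})-p_{2}\Vert_{\infty}\leq\lambda_{\mathcal{X}}$ gives $\Vert p_{1}-p_{2}\Vert_{\infty}\leq\beta(\varepsilon,\tau)+\lambda_{\mathcal{X}}\leq\varepsilon$ by (\ref{cond2}), so $(p_{1},p_{2})\in R$.

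The delicate step, and the one I would scrutinize most carefully, is the selection of $p_{2}$ in condition (ii): property (ii) of Definition \ref{CountApprox} only guarantees a $\lambda_{\mathcal{X}}$--close point of $\mathcal{A}_{\mathcal{X}}(\lambda_{\mathcal{X}})$ for elements of $R_{\tau}(\Sigma)$, so one must argue that the one--step successor $x_{\tau}(q_{2},l_{2})$ actually lies in (or is covered by the approximation of) the reachable set, which is what ensures the matching transition of $T_{\tau,\lambda_{\mathcal{X}},\lambda_{\mathcal{U}}}(\Sigma)$ genuinely exists. This is where the definition of $R_{\tau}(\Sigma)$ and the fact that reachable states flow to reachable states over one sampling period must be invoked; all the remaining estimates are routine triangle inequalities driven by $\delta$--ISS and the budget (\ref{cond2}).
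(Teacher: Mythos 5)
Your proposal is correct and follows essentially the same route as the paper's own proof: the same relation $R=\{(q_{1},q_{2}):\Vert q_{1}-q_{2}\Vert_{\infty}\leq\varepsilon\}$, the same selection of $l_{2}$ via the input approximation and of $p_{2}$ via the state approximation, and the same $\delta$--ISS--plus--triangle--inequality estimate against the budget (\ref{cond2}). The ``delicate step'' you flag—that $x_{\tau}(q_{2},l_{2})$ must lie in $R_{\tau}(\Sigma)$ so that property (ii) of Definition \ref{CountApprox} applies—is exactly the point the paper handles by asserting $z=x_{\tau}(q,l_{2})\in R_{\tau}(\Sigma)$, so your scrutiny there is well placed but does not constitute a departure from the paper's argument.
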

\begin{proof}
The proof can be given along the lines of Theorem 5.1 in \cite{PolaAutom2008}. We include it here for the sake of completeness. Consider the relation \mbox{$R\subseteq Q_{1}\times Q_{2}$} defined by $(x,q)\in R$ if and only if \mbox{$\Vert H_{1}(x)-H_{2}(q)\Vert_{\infty}\leq \varepsilon$}. 
We now show that $R$ is an \mbox{$\varepsilon$--approximate} bisimulation relation between
$T_{\tau}(\Sigma)$ and $T_{\tau,\lambda_{\mathcal{X}},\lambda_{\mathcal{U}}}(\Sigma)$.
Consider any $(x,q)\in R$. Condition (i) in Definition \ref{ASR} is satisfied
by the definition of $R$. Let us now show that condition (ii) in Definition
\ref{ASR} holds. Consider any $l_{1}\in L_{1}$ and the transition 
$x \rTo^{l_{1}}_{1} y$ in $T_{\tau}(\Sigma)$. By definition of $L_{2}$ there exists $l_{2}\in L_{2}$ so that:
\begin{equation}
\left\Vert l_{1} - l_{2}\right\Vert_{\infty} \leq \lambda_{\mathcal{U}}.
\label{no3}
\end{equation}
Set $z=x_{\tau}(q,l_{2})$. Note that since $l_{2}\in L_{2}\subseteq \mathcal{U}_{\tau}$, function $z$ is well defined and $z\in R_{\tau}(\Sigma)$. By definition of $Q_{2}$ there exists $p\in Q_{2}$ so that:
\begin{equation}
\left\Vert z-p \right\Vert_{\infty} \leq \lambda_{\mathcal{X}}.
\label{no4}
\end{equation}
By the above inequality it is clear that $q \rTo^{l_{2}}_{2} p$ in $T_{\tau,\lambda_{\mathcal{X}},\lambda_{\mathcal{U}}}(\Sigma)$. Since $\Sigma$ is $\delta$--ISS
and by (\ref{cond2}), (\ref{no3}) and (\ref{no4}), the following chain of
inequalities holds:%
\begin{eqnarray}
\Vert y-p\Vert_{\infty} & &
 =\Vert y-z+z-p\Vert_{\infty}\leq
 \Vert y-z\Vert_{\infty}+\Vert z-p \Vert_{\infty}\nonumber\\
& &  \leq\beta(\Vert x-q\Vert_{\infty},\tau)+\gamma(\Vert l_{1}-l_{2}\Vert_{\infty}%
)+ \lambda_{\mathcal{X}} \nonumber\\
& & \leq\beta(\varepsilon,\tau)+\gamma
(\lambda_{\mathcal{U}})+ \lambda_{\mathcal{X}}\leq \varepsilon.
\label{no7}
\end{eqnarray}
Hence $(y,p)\in R$ and condition (ii) in Definition \ref{ASR} holds. Condition (iii) can be shown by using a similar reasoning.
%
Finally by the inequality in (\ref{cond2}) and the definition of $q_{2}^{0}$, $\Vert \xi_{0}- q_{2}^{0} \Vert \leq \lambda_{\mathcal{X}} \leq \varepsilon$ and hence, condition (iv) is also satisfied.
\end{proof}

The above result relies upon existence of a countable approximation for the set of reachable states. In order to address this issue, we consider one possible approximation scheme of functional spaces based on spline analysis \cite{SplineBook}. Spline based approximation schemes have been extensively used in the literature of time--delay systems (see e.g. \cite{GermaniSIAM00} and the references therein). 
Let us consider the space $\mathcal{Y}\subseteq C^{0}(I,Y)$ with $Y\subseteq\mathbb{R}^{n}$, $I=[a,b]$, $a,b\in\mathbb{R}$ and $a<b$. 
Given $N\in \mathbb{N}$ consider the following functions (see \cite{SplineBook}):
\[
\begin{array}{rllll}
s_{0}(t)= & 
\left\{
\begin{array}
[c]{lll}
1-(t-a)/h, & t\in [a,a+h],&\\ 
0, & \textrm{otherwise,} &
\end{array}
\right.
& & &  \\ \\
s_{i}(t)= 
&
\left\{
\begin{array}
[c]{lll}
1-i+(t-a)/h, & t\in [a+(i-1)h,a+ih], & \\
1+i-(t-a)/h, & t\in [a+ih,a+(i+1)h], & \\
0, & \textrm{otherwise,} & i=1,2,...,N; 
\end{array} 
\right.\\ \\
s_{N+1}(t)=
&
\left\{
\begin{array}
[c]{lll}
1+(t-b)/h, &  t\in [b-r,b], & \\
0, & \textrm{otherwise,} &
\end{array}
\right.
 & &  \\
\end{array}
\]

where $h=(b-a)/(N+1)$. Functions $s_{i}$ called \textit{splines}, are used to approximate $\mathcal{Y}$. 
The approximation scheme that we use is composed of two steps: we first approximate a function $y\in\mathcal{Y}$ (Figure \ref{fig1}; upper panel) by means of the piecewise--linear function $y_{1}$ (Figure \ref{fig1}; medium panel), obtained by the linear combination of the $N+2$ splines $s_{i}$, centered at time $t=a+ih$ with amplitude\footnote{This first step allows us to approximate the \textit{infinite} dimensional space $\mathcal{Y}$ by means of the \textit{finite} dimensional space $Y^{N+2}$.} $y(a+ih)$; we then approximate function $y_{1}$ by means of function $y_{2}$ (Figure \ref{fig1}; lower panel), obtained by the linear combination of the $N+2$ splines $s_{i}$, centered at time $t=a+ih$ with amplitude $\tilde{y}_{i}$ in the lattice\footnote{We recall that the set $[Y]_{2\theta}$ is defined as in (\ref{grid}).} $[Y]_{2\theta}$, which minimizes the distance from\footnote{This second step allows us to approximate the \textit{finite} dimensional space $Y^{N+2}$ by means of the \textit{countable} space $([Y]_{2\theta})^{N+2}$, which becomes a finite set when the set $Y$ is bounded.} $y(a+ih)$, i.e. 
\mbox{$\tilde{y}_{i}=\arg \min_{y\in [Y]_{2\theta}}\Vert y-y(a+ih) \Vert $}.

\begin{figure}
\begin{center}
\includegraphics[scale=0.32]{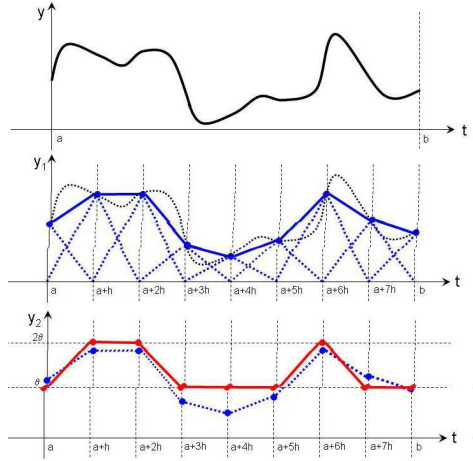}
\caption{Spline--based approximation scheme of a functional space.}
\label{fig1}
\end{center}
\end{figure}
Given any $N\in\mathbb{N}$, $\theta,M\in \mathbb{R}^{+}$ let\footnote{The real $M$ is a parameter associated with $\mathcal{Y}$ and its role will become clear in the subsequent developments.}:
\begin{equation}
\Lambda(N,\theta,M):=h^{2} M/8+(N+2)\theta,
\label{lambda}
\end{equation}
with $h=(b-a)/(N+1)$. Function $\Lambda$ will be shown to be an upper bound to the error associated with the approximation scheme that we propose. 
It is readily seen that for any $\lambda\in\mathbb{R}^{+}$ and any $M\in\mathbb{R}^{+}$ there always exist $N\in\mathbb{N}$ and $\theta\in\mathbb{R}^{+}$ so that $\Lambda(N,\theta,M)\leq \lambda$. Let $N_{\lambda,M}$ and $\theta_{\lambda,M}$ be such that $\Lambda(N_{\lambda,M},\theta_{\lambda,M},M)\leq \lambda$. For any $\lambda\in\mathbb{R}^{+}$ and $M\in\mathbb{R}^{+}$, define the operator 
\[
\psi_{\lambda,M}: \mathcal{Y} \rightarrow C^{0}([a,b];Y),
\]
that associates to any function $y\in\mathcal{Y}$ the function:
\begin{equation}
\psi_{\lambda,M}(y)(t):=\sum_{i=0}^{N_{\lambda,M}+1} \tilde{y}_{i}s_{i}(t), \hspace{5mm} t\in [a,b],
\label{refpsi}
\end{equation}
where $\tilde{y}_{i}\in [Y]_{2\theta_{\lambda,M}}$ and $\Vert \tilde{y}_{i} - y(a+ih) \Vert \leq \theta_{\lambda,M}$, for any \mbox{$i=0,1,...,N_{\lambda,M}+1$}. Note that operator $\psi_{\lambda,M}$ is not uniquely defined. For any given $M\in\mathbb{R}^{+}$ and any given precision $\lambda\in\mathbb{R}^{+}$ define:
\begin{equation}
\mathcal{A}_{\mathcal{Y},M}(\lambda):=\psi_{\lambda,M}(\mathcal{Y}).
\label{Ay}
\end{equation}

The above approximation scheme is employed to construct countable approximations of the set $R_{\tau}(\Sigma)$ of reachable states (see Proposition \ref{coroll}). \\
Consider a digital time--delay system \mbox{$\Sigma=(X,\mathcal{X},\xi_{0},U,\mathcal{U}_{\tau},f)$} and suppose that:
\begin{itemize}
\item[(A.1)] $\Sigma$ is $\delta$--ISS;
\item[(A.2)] $X$ and $U$ are bounded sets;
\item[(A.3)] Functional $f$ is Frech\'et differentiable in
$C^{0}([-\Delta,0];\mathbb{R}^{n})\times \mathbb{R}^{m}$;
\item[(A.4)] The Frech\'et differential $J(\phi,u)$ of $f$ is bounded on bounded subsets of $C^{0}([-\Delta,0];\mathbb{R}^{n})\times \mathbb{R}^{m}$.
\end{itemize}
Under the above assumptions, the following bounds are well defined:
\begin{equation}
\begin{array}
{ll}
B_{X}=\sup_{x\in X}\Vert x \Vert , &
B_{J}=\sup_{(\phi,u)\in C^{0}([-\Delta,0];X)\times U }\Vert J(\phi,u)\Vert,\nonumber\\ 
B_{U}=\sup_{u\in U}\Vert u \Vert, &
M=(\beta (B_{X},0)+\gamma (B_{U})+B_{U})\kappa B_{J},
\end{array}
\label{cost}
\end{equation}
%
%
where 
$\kappa$ is the Lipschitz constant of functional $f$ in the bounded set $C^{0}([-\Delta,0];$ $X)\times U$ and $\Vert J(\phi,u)\Vert$ denotes the norm of the operator $J(\phi,u):C^{0}([-\Delta,0];$ $\mathbb{R}^{n})\times \mathbb{R}^{m}\rightarrow \mathbb{R}^{n}$. 
We can now give the following result that points out sufficient conditions for the existence of countable approximations of $R_{\tau}(\Sigma)$.
\begin{proposition}
Consider a digital time--delay system $\Sigma=(X,\mathcal{X},\xi_{0},U,\mathcal{U}_{\tau},f)$, satisfying assumptions (A.1-4) and the following conditions:

\[
\begin{array}
{llll}
(A.5) &
\xi_0\in PC^2([-\Delta,0];X), &
\Vert \xi_{0} \Vert_{\infty}\leq B_{X}^{0}\leq B_{X}, & 
\left \Vert D^2\xi_0\right \Vert_{\infty}<M,\nonumber\\
&
\beta(B_{X}^{0},0)+\gamma(B_{U})\leq B_{X},& 
\beta(B_{X}^{0},\tau)+\gamma(B_{U})\leq B_{X}^{0}, &
\tau>2\Delta,\nonumber
\end{array}
\nonumber
\]

with $M$ as in (\ref{cost}). Then the set $\mathcal{A}_{\mathcal{X}}$ defined for any $\lambda_{\mathcal{X}}\in\mathbb{R}^{+}$ by:
\begin{equation}
\mathcal{A}_{\mathcal{X}}(\lambda_{\mathcal{X}})=\psi_{\lambda_{\mathcal{X}},M}(R_{\tau}(\Sigma)),
\label{ApproxR}
\end{equation}
with $\psi_{\lambda_{\mathcal{X}},M}$ as in (\ref{refpsi}), is a countable approximation of $R_{\tau}(\Sigma)$.
\label{coroll}
\end{proposition}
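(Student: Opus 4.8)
The plan is to verify, for $\mathcal{Y}=R_{\tau}(\Sigma)$ with $Y=X$ and $I=[-\Delta,0]$, the three defining properties of a countable approximation in Definition \ref{CountApprox}. The whole argument hinges on one uniform estimate: every reachable state $y\in R_{\tau}(\Sigma)$, viewed as a function on $[-\Delta,0]$, has $\|D^2 y\|_{\infty}\le M$ with $M$ as in (\ref{cost}). Once this is in hand, the error bound that is already encoded in $\Lambda$ closes conditions (ii) and (iii), and the structure of the operator $\psi_{\lambda_{\mathcal{X}},M}$ closes (i).

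First I would record a uniform a priori bound on trajectories. Writing a reachable state as $y=x_{k\tau}(\xi_0,u)$ and comparing through the $\delta$--ISS inequality (\ref{deltaISS}) with the trivial trajectory $x_t(0,0)=0$ (recall $f(0,0)=0$), I obtain, for all $t\ge 0$, $\|x_t(\xi_0,u)\|_{\infty}\le \beta(\|\xi_0\|_{\infty},t)+\gamma(B_U)\le \beta(B_X^0,0)+\gamma(B_U)\le \beta(B_X,0)+\gamma(B_U)$, using $\|\xi_0\|_{\infty}\le B_X^0\le B_X$ and $\tau>0$ from (A.5), $\|u\|_{\infty}\le B_U$, and monotonicity of $\beta$. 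This sup--norm control on the state is what feeds the derivative estimates below (and is consistent with the trajectory remaining in the bounded set $X$ by the first inequality of (A.5)).

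Next comes the second--derivative bound, the technical core. The key structural observation is that, since the input is piecewise constant and switches only at integer multiples of $\tau$ while $\tau>2\Delta$ by (A.5), the entire history window $[k\tau-2\Delta,k\tau]$ needed to evaluate $x_{k\tau}$ and its first derivative lies inside a single interval of constant input. On such a window $\dot x(t)=f(x_t,\bar u)$ with $\bar u$ fixed, so $\dot x$ is continuous there, the curve $t\mapsto x_t$ is $C^1$ into $C^0([-\Delta,0];\mathbb{R}^n)$, and I may differentiate once more through the Fr\'echet differential (A.3): $\ddot x(t)=J(x_t,\bar u)[\dot x_t,0]$. Using the Lipschitz bound on $f$, $\|\dot x_t\|_{\infty}=\sup_s\|f(x_{t+s},\bar u)\|\le \kappa(\beta(B_X,0)+\gamma(B_U)+B_U)$, together with $\|J\|\le B_J$ from (A.4), gives exactly $\|\ddot x(t)\|\le \kappa B_J(\beta(B_X,0)+\gamma(B_U)+B_U)=M$. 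For the one remaining reachable state, $\xi_0$ itself (the case $k=0$), the bound $\|D^2\xi_0\|_{\infty}<M$ is imposed directly in (A.5). Hence $\|D^2 y\|_{\infty}\le M$ for every $y\in R_{\tau}(\Sigma)$.

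Finally I would close the three conditions. For (ii) and (iii), for any $y\in R_{\tau}(\Sigma)$ I split $\|y-\psi_{\lambda_{\mathcal{X}},M}(y)\|_{\infty}$ into the piecewise--linear interpolation error, bounded by $h^2M/8$ via the standard $C^2$ estimate using $\|D^2 y\|_{\infty}\le M$, plus the lattice--quantization error, bounded by $(N+2)\theta$ since $\|\tilde y_i-y(a+ih)\|\le\theta$ and $0\le s_i\le 1$; together these give $\|y-\psi_{\lambda_{\mathcal{X}},M}(y)\|_{\infty}\le\Lambda(N_{\lambda_{\mathcal{X}},M},\theta_{\lambda_{\mathcal{X}},M},M)\le\lambda_{\mathcal{X}}$. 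This yields (ii) at once, and (iii) because every $z\in\mathcal{A}_{\mathcal{X}}(\lambda_{\mathcal{X}})$ equals $\psi_{\lambda_{\mathcal{X}},M}(y)$ for some reachable $y$, so the same bound applies. For (i), the range of $\psi_{\lambda_{\mathcal{X}},M}$ consists of finite spline combinations $\sum_i\tilde y_i s_i$ with coefficients in the lattice $[X]_{2\theta_{\lambda_{\mathcal{X}},M}}$, which is countable, indeed finite since $X$ is bounded by (A.2). I expect the main obstacle to be the second--derivative step: rigorously justifying that $t\mapsto x_t$ is genuinely $C^1$ into $C^0([-\Delta,0];\mathbb{R}^n)$ so that the chain rule legitimately applies to the functional $f$, and checking carefully that $\tau>2\Delta$ really keeps every input--switch time out of the differentiation window.
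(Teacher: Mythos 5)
Your proposal is correct and follows essentially the same route as the paper: the paper factors the argument into Lemma \ref{prop22} (reachable states are $PC^2$ with $\Vert D^2 x_{\tau}\Vert_{\infty}\leq M$, proved via the $\delta$--ISS bound against the trivial solution, the Lipschitz estimate on $f$, and the Fr\'echet differential with $\dot u=0$ on the constant-input window guaranteed by $\tau>2\Delta$) and Lemma \ref{prop1} (spline interpolation error $h^2M/8$ plus lattice quantization error $(N+2)\theta$ yields $\Lambda$, with countability from the lattice), which is exactly your two-step argument written inline. The technical point you flag at the end --- that $\tau>2\Delta$ keeps input switches and the merely piecewise-smooth initial segment out of the differentiation window --- is precisely the role this assumption plays in the paper's proof.
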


\begin{algorithm}[h]\label{alg}
\SetLine
\caption{Construction of symbolic models for time--delay systems.}
\textbf{input:}\\
time--delay system $\Sigma=(X,\mathcal{X},\xi_{0},U,\mathcal{U},f)$ satisfying assumptions (A.1-5)\;
parameters $\tau,N,\theta,\lambda_{\mathcal{U}},M$\;
\textbf{init:}\\
$k:=0$\;
$Q^{k}:=\{q_{2}^{0}\}$, where $q_{2}^{0}=\psi_{\lambda,M}(\xi_{0})$, with $\psi_{\lambda,M}$ defined as in (\ref{refpsi}) and $\lambda=\Lambda(N,\theta,M)$\;
$Q^{k-1}:=\varnothing$\;
$\rTo_{k}:=\varnothing$\;
$H_{2}:=\imath : Q_{2} \hookrightarrow O_{2}$\;
$h:=\Delta/(N+1)$\;
\While{$Q^{k}\neq Q^{k-1}$}
{
\ForEach{$q \in Q^{k}$}
			{
			\ForEach{$l_{2}\in [U]_{2\lambda_{\mathcal{U}}}$}
				{
				\textbf{compute} $z:=x_{\tau}(q,l_{2})$\;
				\textbf{compute} $p=\psi_{\lambda,M}(z)$, with $\psi_{\lambda,M}$ defined as in (\ref{refpsi}) and $\lambda=\Lambda(N,\theta,M)$;
				$Q^{k+1}:=Q^{k}\cup\{p\}$\;
				$\rTo_{k+1}:=\rTo_{k}\cup\{(q,l_{2},p)\}$\;
				}			
			}
			k:=k+1\;
			}
\textbf{output:} $T_{\tau,N,\theta,\lambda_{\mathcal{U}}}(\Sigma):=(Q^{k},q_{2}^{0},[U]_{\lambda_{\mathcal{U}}},\rTo_{k},\mathcal{X},H_{2})$
\end{algorithm}

While the first and third assumptions in (A.5) are given apriori on the time--delay system under study, the other assumptions are satisfied for sufficiently large values of $B_{X}$ and $\tau$. 
Note that if $B_{X}$ does not satisfy the assumptions in (A.5) one can always embed the state space $X$ of the time--delay system in a bigger state space $X'$, so that the corresponding bound $B_{X'}=\sup_{x\in X'}\Vert x \Vert$ satisfies the required assumptions. The proof of the above result requires some technicalities and is therefore reported in the Appendix (Section \ref{App2}). \\
We now have all the ingredients to define a symbolic model for digital time--delay systems. Given $\tau\in\mathbb{R}^{+}$, $\theta,\lambda_{\mathcal{U}}\in\mathbb{R}^{+}$ and $N\in\mathbb{N}$, consider the transition system 
\begin{equation}
T_{\tau,N,\theta,\lambda_{\mathcal{U}}}(\Sigma):=(Q_{2},q_{2}^{0},L_{2},\rTo_{2},O_{2},H_{2}),
\label{sm}
\end{equation}
where:
\begin{itemize}
\item $Q_{2}=\mathcal{A}_{\mathcal{X}}(\Lambda(N,\theta,M))$ with $\mathcal{A}_{\mathcal{X}}$ as in (\ref{ApproxR}) with $\lambda_{\mathcal{X}}=\Lambda(N,\theta,M)$ and $M$ as in (\ref{cost});
\item $q_{2}^{0}=\psi_{\lambda,M}(\xi_{0})$, with $\psi_{\lambda_{\mathcal{X}},M}$ defined as in (\ref{refpsi}) and $\lambda_{\mathcal{X}}=\Lambda(N,\theta,M)$;
\item $L_{2}=\mathcal{A}_\mathcal{U}(\lambda_{\mathcal{U}})$; 
\item $q \rTo^{l}_{2} p$, if $\left\Vert p-x_{\tau}(q,l)\right\Vert_{\infty} 
\leq \Lambda(N,\theta,M)$;
\item $O_{2}=\mathcal{X}$;
\item $H_{2}=\imath : Q_{2} \hookrightarrow O_{2}$.
\end{itemize}
Note that the transition system in (\ref{sm}) coincides with the one in (\ref{T2}) by setting $\lambda_{\mathcal{X}}=\Lambda(N,\theta,M)$. Moreover, it is readily seen that:
\begin{proposition}
If the digital time--delay system $\Sigma$ satisfies assumptions (A.1-5), transition system $T_{\tau,N,\theta,\lambda_{\mathcal{U}}}(\Sigma)$ in (\ref{sm}) is symbolic.
\end{proposition}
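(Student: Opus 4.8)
The plan is to verify directly the definition of a symbolic transition system, namely that both the state set $Q_{2}$ and the label set $L_{2}$ of $T_{\tau,N,\theta,\lambda_{\mathcal{U}}}(\Sigma)$ in (\ref{sm}) are \emph{finite}. Since countability of these sets is already guaranteed (the labels by (\ref{UtauQ}) and the states by Proposition \ref{coroll}), the only additional work is to upgrade countability to finiteness, and this is precisely where assumption (A.2), the boundedness of $X$ and $U$, does all the work.

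First I would treat the label set. By construction $L_{2}=\mathcal{A}_{\mathcal{U}}(\lambda_{\mathcal{U}})$, which by (\ref{UtauQ}) consists of the constant input signals taking their value in the lattice $[U]_{2\lambda_{\mathcal{U}}}$. Because $U$ is bounded by (A.2), only finitely many lattice points of $[U]_{2\lambda_{\mathcal{U}}}$ fall inside $U$, and hence $L_{2}$ is a finite set.

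Next I would treat the state set. Here $Q_{2}=\mathcal{A}_{\mathcal{X}}(\Lambda(N,\theta,M))=\psi_{\lambda_{\mathcal{X}},M}(R_{\tau}(\Sigma))$ with $\lambda_{\mathcal{X}}=\Lambda(N,\theta,M)$, which by (\ref{lambda}) forces $N_{\lambda_{\mathcal{X}},M}=N$ and $\theta_{\lambda_{\mathcal{X}},M}=\theta$. By the definition (\ref{refpsi}) of the spline operator, every element $\psi_{\lambda_{\mathcal{X}},M}(y)$ is the spline $\sum_{i=0}^{N+1}\tilde{y}_{i}s_{i}$, and is therefore completely determined by the tuple of coefficients $(\tilde{y}_{0},\ldots,\tilde{y}_{N+1})$, each $\tilde{y}_{i}$ being a point of the lattice $[X]_{2\theta}$. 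The key observation is that, since $R_{\tau}(\Sigma)\subseteq\mathcal{X}\subseteq C^{0}([-\Delta,0];X)$, every sampled value $y(a+ih)$ lies in the bounded set $X$; as $\tilde{y}_{i}$ minimises the distance to $y(a+ih)$ over the lattice, it lies within distance $\theta$ of $X$, hence in a fixed bounded region containing only finitely many lattice points. Consequently there are only finitely many admissible tuples $(\tilde{y}_{0},\ldots,\tilde{y}_{N+1})$ --- at most the number of such lattice points raised to the power $N+2$ --- so $Q_{2}$ is finite.

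With $Q_{2}$ and $L_{2}$ both finite, the transition system $T_{\tau,N,\theta,\lambda_{\mathcal{U}}}(\Sigma)$ is symbolic by definition, which completes the argument. I do not expect any genuine obstacle here: the proposition is essentially a bookkeeping statement, and the one point requiring care is simply to make explicit that boundedness of $X$ and $U$ turns the \emph{countable} lattices $[X]_{2\theta}$ and $[U]_{2\lambda_{\mathcal{U}}}$ into \emph{finite} index sets; once this is observed, finiteness of $L_{2}$ and of the coefficient space parametrising $Q_{2}$ follows immediately.
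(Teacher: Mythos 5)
Your proof is correct and follows exactly the reasoning the paper leaves implicit (the proposition is stated as ``readily seen''), which the paper itself spells out in the termination proof of Algorithm 1: boundedness of $X$ and $U$ from (A.2) makes the lattices $[X]_{2\theta}$ and $[U]_{2\lambda_{\mathcal{U}}}$ finite, so the finitely many spline coefficient tuples bound $Q_{2}$ and the quantized constant inputs bound $L_{2}$. One minor simplification: by the definition in (\ref{grid}) the lattice $[X]_{2\theta}$ is already a subset of $X$, so your ``within distance $\theta$ of $X$'' detour is unnecessary, though harmless.
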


Transition system $T_{\tau,N,\theta,\lambda_{\mathcal{U}}}(\Sigma)$ can be constructed by analytical and/or numerical integration of the solutions of the time--delay system. One possible construction scheme is illustrated in Algorithm \ref{alg} which proceeds, as follows. The set $Q^{k}$ of states of the symbolic model at step $k=0$ is initialized to contain the (only) symbol $q_{2}^{0}=\psi_{\lambda,M}(\xi_{0})$ that is associated with the initial condition $\xi_{0}$. Then, for any initial condition $q\in Q^{k}$ and any control input \mbox{$l_{2}\in [U]_{2\lambda_{\mathcal{U}}}$}, the algorithm computes the solution $z=x_{\tau}(q,l_{2})$ of the differential equation in (\ref{TDS}) at time $t=\tau$, and it adds the symbol $p=\psi_{\lambda,M}(z)$ to $Q^{k}$. In the end of this basic step, index $k$ is increased to $k+1$ and the above basic step is repeated. 
The algorithm continues by adding symbols to $Q^{k}$ since no more symbols are found, or equivalently, since a step $k^{*}$ is found, for which $Q^{k^{*}}=Q^{k^{*}+1}$. Convergence properties of Algorithm \ref{alg} are discussed in the following result.

\begin{theorem}
Algorithm \ref{alg} terminates in a finite number of steps.
\end{theorem}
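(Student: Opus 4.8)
The plan is to prove termination by exhibiting a \emph{finite} reservoir of states from which the algorithm draws, and then observing that the monotonically increasing sequence $Q^{0}\subseteq Q^{1}\subseteq\cdots$ must stabilize inside any finite set. Since every update in the inner loop has the form $Q^{k+1}:=Q^{k}\cup\{p\}$, the inclusion $Q^{k}\subseteq Q^{k+1}$ holds at every step; thus once I show that all generated states lie in a fixed finite set $\mathcal{Q}_{\max}$, the chain cannot grow indefinitely and some iteration must satisfy $Q^{k^{*}}=Q^{k^{*}+1}$, which is precisely the exit condition of the \texttt{while} loop.

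To build the finite reservoir I would use the structure of the spline operator in (\ref{refpsi}). Every state produced by the algorithm has the form $p=\psi_{\lambda,M}(z)$, and by definition such a function is completely determined by its $N+2$ coefficients $\tilde{z}_{0},\dots,\tilde{z}_{N+1}$, each belonging to the lattice $[X]_{2\theta}$. Invoking assumption (A.2), the boundedness of $X$ forces $[X]_{2\theta}$ to be a \emph{finite} set, so the collection of all splines with coefficients in $[X]_{2\theta}$ has cardinality at most $\lvert[X]_{2\theta}\rvert^{\,N+2}$. Taking $\mathcal{Q}_{\max}$ to be this finite collection completes the reservoir, provided I can establish that each coefficient $\tilde{z}_{i}$ actually falls in $[X]_{2\theta}$.

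Establishing this reduces to showing that each function $z=x_{\tau}(q,l_{2})$ computed in the inner loop remains inside the bounded state space, so that its sampled values $z(a+ih)\in X$ round to lattice points in $[X]_{2\theta}$. Here I would call on the $\delta$--ISS inequality (\ref{deltaISS}): comparing the trajectory issued from $q$ with the trivial trajectory from the origin (legitimate because $f(0,0)=0$ makes $x_{t}(0,0)\equiv 0$) gives $\Vert x_{\tau}(q,l_{2})\Vert_{\infty}\le\beta(\Vert q\Vert_{\infty},\tau)+\gamma(\Vert l_{2}\Vert_{\infty})$. Since the inputs take values in $U$ we have $\Vert l_{2}\Vert_{\infty}\le B_{U}$, and the forward--invariance condition $\beta(B_{X}^{0},\tau)+\gamma(B_{U})\le B_{X}^{0}$ from (A.5) then shows that the bound $\Vert\cdot\Vert_{\infty}\le B_{X}^{0}$ is preserved by the time--$\tau$ map. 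Because the seed $q_{2}^{0}=\psi_{\lambda,M}(\xi_{0})$ inherits this bound from $\Vert\xi_{0}\Vert_{\infty}\le B_{X}^{0}$, an induction on $k$ confirms that every state in every $Q^{k}$ respects $B_{X}^{0}$ and hence lies in $\mathcal{Q}_{\max}$.

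The step I expect to be the main obstacle is the bookkeeping of this induction at the interface between the continuous evolution and the spline rounding: one must check that rounding each admissible value $z(a+ih)\in X$ to its nearest point $\tilde{z}_{i}\in[X]_{2\theta}$ keeps the resulting spline $p$ both bounded by $B_{X}^{0}$ and admissible as an initial condition, so that the next evolution $x_{\tau}(p,\cdot)$ is well defined and the inductive hypothesis can be reapplied. Once this boundedness--under--rounding is secured---precisely the role of assumptions (A.2) and (A.5)---the finiteness of $\mathcal{Q}_{\max}$ together with the monotonicity of $Q^{k}$ yields termination at once.
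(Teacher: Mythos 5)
Your proposal is correct and follows essentially the same argument as the paper: a finite reservoir of spline functions with coefficients in the finite lattice $[X]_{2\theta}$ (finite by boundedness of $X$, assumption (A.2)), combined with the monotonicity $Q^{k}\subseteq Q^{k+1}$, forces the chain to stabilize and the \texttt{while} loop to exit. The additional $\delta$--ISS induction you carry out to keep trajectories inside the bounded state space is not part of the paper's own (terser) termination proof, which treats membership of $Q^{k}$ in the finite spline family as holding by construction of $\psi_{\lambda,M}$; your extra care essentially replays the invariance reasoning the paper delegates to Lemma \ref{prop22} in the Appendix.
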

\begin{proof}
Let $\mathcal{Z}$ be the collection of all functions of the form (\ref{refpsi}) with \mbox{$\tilde{y}_{1},\tilde{y}_{2},$ $...,\tilde{y}_{N_{\lambda,M}+1}\in [X]_{2\theta}$}. 
%
Since the set $X$ is bounded, the set $[X]_{2\theta}$ is finite and hence the set $\mathcal{Z}$ is finite as well. 
By construction the sequence $Q^{k}$ is non--decreasing, i.e. $Q^{k}\subseteq Q^{k+1}$ and each set of the sequence is contained in $\mathcal{Z}$, i.e. $Q^{k}\subseteq \mathcal{Z}$. Hence, a fixed point of Algorithm \ref{alg} will be found in a finite number of steps, which is upper bounded by the cardinality of $\mathcal{Z}$.
\end{proof}

We can now give the main result of this paper.

\begin{theorem}
Consider a digital time--delay system $\Sigma=(X,\mathcal{X},\xi_{0},U,\mathcal{U}_{\tau},f)$ and any desired precision $\varepsilon\in\mathbb{R}^{+}$. Suppose that assumptions (A.1-5) are satisfied. Moreover let 
$\tau,\theta,\lambda_{\mathcal{U}}\in\mathbb{R}^{+}$ and $N\in\mathbb{N}$ satisfy the following
inequality%
\begin{eqnarray}
\beta(\varepsilon,\tau)+\gamma(\lambda_{\mathcal{U}})+ \Lambda(N,\theta,M) \leq \varepsilon,
\label{cond3}
\end{eqnarray}
with $\Lambda$ as in (\ref{lambda}) and $M$ as in (\ref{cost}). Then transition systems $T_{\tau}(\Sigma)$ and $T_{\tau,N,\theta,\lambda_{\mathcal{U}}}(\Sigma)$ are $\varepsilon$--bisimilar.
\label{Main}
\end{theorem}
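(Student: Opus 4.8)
The plan is to reduce Theorem~\ref{Main} to a direct combination of Theorem~\ref{ThMain} and Proposition~\ref{coroll}, since the intervening definitions have been arranged precisely so that the symbolic model (\ref{sm}) is a special case of the countable model (\ref{T2}). No new analytic estimate is needed; the substance has already been spent in proving the bisimulation estimate (Theorem~\ref{ThMain}) and the spline-approximation bound (Proposition~\ref{coroll}).

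First I would set $\lambda_{\mathcal{X}} := \Lambda(N,\theta,M)$, with $M$ as in (\ref{cost}). Under this identification the hypothesis (\ref{cond3}) is literally the hypothesis (\ref{cond2}) of Theorem~\ref{ThMain}, and the transition system $T_{\tau,N,\theta,\lambda_{\mathcal{U}}}(\Sigma)$ of (\ref{sm}) coincides with $T_{\tau,\lambda_{\mathcal{X}},\lambda_{\mathcal{U}}}(\Sigma)$ of (\ref{T2}), as already remarked after (\ref{sm}). I would also note that the side condition $\beta(\varepsilon,\tau)<\varepsilon$ required by Theorem~\ref{ThMain} follows for free from (\ref{cond3}), because $\gamma$ is a $\mathcal{K}$ function (so $\gamma(\lambda_{\mathcal{U}})>0$ as $\lambda_{\mathcal{U}}>0$) and $\Lambda(N,\theta,M)>0$, which together force $\beta(\varepsilon,\tau)<\varepsilon$.

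Next I would discharge the one remaining hypothesis of Theorem~\ref{ThMain}, namely the existence of a countable approximation $\mathcal{A}_{\mathcal{X}}$ of $R_{\tau}(\Sigma)$, by invoking Proposition~\ref{coroll}. Assumptions (A.1-5) are in force by hypothesis, so Proposition~\ref{coroll} guarantees that $\mathcal{A}_{\mathcal{X}}(\lambda_{\mathcal{X}}) = \psi_{\lambda_{\mathcal{X}},M}(R_{\tau}(\Sigma))$ is indeed a countable approximation of $R_{\tau}(\Sigma)$ in the sense of Definition~\ref{CountApprox}. In particular (A.1) supplies the $\delta$-ISS hypothesis that Theorem~\ref{ThMain} needs directly, while (A.2-5) supply the regularity and boundedness needed for the approximation step.

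With all the hypotheses of Theorem~\ref{ThMain} verified, I would apply it to conclude that $T_{\tau}(\Sigma)$ and $T_{\tau,\lambda_{\mathcal{X}},\lambda_{\mathcal{U}}}(\Sigma) = T_{\tau,N,\theta,\lambda_{\mathcal{U}}}(\Sigma)$ are $\varepsilon$-bisimilar, which is the assertion (and symmetry of Definition~\ref{ASR} makes the order of the two systems irrelevant). The only genuine work is bookkeeping: checking that (A.1-5) simultaneously meet the separate hypothesis lists of Theorem~\ref{ThMain} and Proposition~\ref{coroll} and that the implicit choice of $\tau$ is consistent. I therefore expect no real obstacle here; the main result is essentially a corollary assembling the two preceding results under the substitution $\lambda_{\mathcal{X}}=\Lambda(N,\theta,M)$.
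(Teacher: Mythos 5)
Your proposal is correct and follows essentially the same route as the paper: both reduce Theorem~\ref{Main} to Theorem~\ref{ThMain} plus Proposition~\ref{coroll} via the identification $\lambda_{\mathcal{X}}=\Lambda(N,\theta,M)$, under which (\ref{cond3}) becomes (\ref{cond2}) and the symbolic model (\ref{sm}) coincides with (\ref{T2}). If anything, your version is slightly tidier than the paper's, since you run the substitution in the logically natural direction (from the given $N,\theta$ to $\lambda_{\mathcal{X}}$, rather than the reverse) and you explicitly check the side condition $\beta(\varepsilon,\tau)<\varepsilon$, which the paper leaves implicit.
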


\begin{proof}
The map $\mathcal{A}_{\mathcal{U}}$ is a countable approximation of $U$ and by Proposition \ref{coroll}, the map $\mathcal{A}_{\mathcal{X}}$ is a countable approximation of $R_{\tau}(\Sigma)$. Choose $\lambda_{\mathcal{X}} \in \mathbb{R}^{+}$ and $\lambda_{\mathcal{U}}\in\mathbb{R}^{+}$ satisfying 
the inequality in (\ref{cond2}). There exist $\theta\in\mathbb{R}^{+}$ and $N\in\mathbb{N}$ so that $\lambda_{\mathcal{X}}=\Lambda(N,\theta,M)$ and hence the inequality in (\ref{cond3}) holds. Finally the result holds as a direct application of Theorem \ref{ThMain}.
\end{proof}

The above result is important because it provides a method to translate time--delay systems to approximately bisimilar symbolic models. Hence, it gives a concrete alternative methodology to the control design of nonlinear time--delay systems, by regarding control design on time--delay systems as control design on symbolic models (see e.g. \cite{LTLControl,TabuadaTAC08} for symbolic models--based control of linear and nonlinear control systems).

\section{Conclusion}
In this paper we showed that incrementally input--to--state stable digital time--delay systems admit symbolic models that are approximately bisimilar to the original system, with a precision that can be rendered as small as desired. An algorithm has been presented which computes the proposed symbolic models. Termination of the algorithm in finite time is ensured under a boundedness assumption on the sets of states and inputs of the system.
 
\bibliographystyle{alpha}
\bibliography{biblio1}

\section{Appendix}
\subsection{Notation}\label{sec:notation}
The symbols $\mathbb{N}$, $\mathbb{Z}$, $\mathbb{R}$, $\mathbb{R}^{+}$ and $\mathbb{R}_{0}^{+}$ denote the sets of natural, integer, real, positive and nonnegative real numbers, respectively. 
Given a vector $x\in\mathbb{R}^{n}$ the $i$--th element of $x$ is denoted by $x_{i}$; furthermore $\Vert x\Vert$ denotes the infinity norm of $x$; we recall that 
\mbox{$\Vert x\Vert:=max\{|x_1|,|x_2|,...,|x_n|\}$}, where $|x_i|$ is the absolute value of $x_i$. 
For any $A\subseteq
\mathbb{R}^{n}$ and \mbox{$\theta\in{\mathbb{R}^{+}}$} define 
\begin{equation}
[A]_{\theta}:=\{a\in A\,\,|a_{i}=k_{i}\theta,\,\,\,k_{i}\in\mathbb{Z},i=1,...,n\}.
\label{grid}
\end{equation}
Given a measurable and locally essentally bounded function \mbox{$f:\mathbb{R}_{0}^{+}\rightarrow\mathbb{R}^{n}$}, the
\mbox{(essential)} supremum norm of $f$ is denoted by $\Vert f\Vert_{\infty}$; we recall that 
\mbox{$\Vert f\Vert_{\infty}:=(ess)sup\{\Vert f(t)\Vert,$ $t\geq0\}$}. 
For a given time $\tau\in\mathbb{R}^{+}$, define $f_{\tau}$ so that
$f_{\tau}(t)=f(t)$, for any $t\in [0,\tau[$, and $f(t)=0$ elsewhere; 
$f$ is said to be locally essentially bounded if for any $\tau\in\mathbb{R}^{+}$,
$f_{\tau}$ is essentially bounded.
A continuous function $\gamma:\mathbb{R}_{0}^{+}%
\rightarrow\mathbb{R}_{0}^{+}$ is said to belong to class $\mathcal{K}$ if it
is strictly increasing and \mbox{$\gamma(0)=0$}; $\gamma$ is said to belong to class
$\mathcal{K}_{\infty}$ if \mbox{$\gamma\in\mathcal{K}$} and $\gamma(r)\rightarrow
\infty$ as $r\rightarrow\infty$. A continuous function \mbox{$\beta:\mathbb{R}_{0}^{+}\times\mathbb{R}_{0}^{+}\rightarrow\mathbb{R}_{0}^{+}$} is said to
belong to class $\mathcal{KL}$ if for each fixed $s$, the map $\beta(r,s)$
belongs to class $\mathcal{K}$ with respect to $r$ and, for each
fixed $r$, the map $\beta(r,s)$ is decreasing with respect to $s$ and
$\beta(r,s)\rightarrow0$ as \mbox{$s\rightarrow\infty$}. 
Given $k,n\in\mathbb{N}$ with $n\geq 1$ and $I=[a,b]\subseteq \mathbb{R}$, $a,b\in\mathbb{R}$, $a<b$ let $C^{k}(I;\mathbb{R}^{n})$ be the space of functions $f:I\rightarrow \mathbb{R}^{n}$ that are continuously differentiable $k$ times. Given $k\geq 1$, let $PC^{k}(I;\mathbb{R}^{n})$ be the space of $C^{k-1}(I;\mathbb{R}^{n})$ functions $f:I\rightarrow \mathbb{R}^{n}$ whose $k$--th derivative exists except in a finite number of reals, and it is bounded, i.e. there exist $\gamma_{0},\gamma_{1},...,\gamma_{s}\in\mathbb{R}^{+}$ with  $a=\gamma_{0}<\gamma_{1}<...<\gamma_{s}=b$ so that $D^{k}\,f$ is defined on each open interval $(\gamma_{i},\gamma_{i+1})$, $i=0,1,...,s-1$ and $\max_{i=0,1,...,s-1}\sup_{t\in(\gamma_{i},\gamma_{i+1})}\Vert D^{k}\,f(t) \Vert _{\infty}<\infty$.  
For any continuous function $x(s)$, defined on
$-\Delta\leq s<a$, $a>0$, and any fixed $t$, $0\leq t<a$, the
standard symbol $x_{t}$ will denote the
 element of $C^{0}([-\Delta,0];\mathbb{R}^{n})$ defined by
$x_{t}(\theta)=x(t+\theta)$, $-\Delta\leq\theta\leq 0$. 
The identity map on a set $A$ is denoted by $1_{A}$. 
Given two sets $A$ and $B$, if $A$ is a subset of $B$
we denote by \mbox{$\imath_{A}:A\hookrightarrow B$} or simply by $\imath$ the natural
inclusion map taking any $a\in A$ to \mbox{$\imath (a) = a \in B$}. Given a function $f:A\rightarrow B$ the symbol $f(A)$ denotes
the image of $A$ through $f$, i.e. $f(A):=\{b\in B:\exists a\in A$ s.t.
$b=f(a)\}$. 
\subsection{Technical Proofs}\label{App2}
The proof of Proposition \ref{coroll} is based on the following lemmas.

\begin{lemma}
Suppose that $\mathcal{Y}\subseteq PC^{2}(I;Y)$ and there exists $M\in\mathbb{R}^{+}$ so that $\Vert D^{2}\,y\Vert_{\infty}\leq M$ for any $y\in \mathcal{Y}$. Then $\mathcal{A}_{\mathcal{Y},M}$ as defined in (\ref{Ay}), is a countable approximation of $\mathcal{Y}$.
\label{prop1}
\end{lemma}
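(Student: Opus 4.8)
The plan is to verify directly the three defining properties of a countable approximation from Definition \ref{CountApprox} for the map $\mathcal{A}_{\mathcal{Y},M}(\lambda)=\psi_{\lambda,M}(\mathcal{Y})$, fixing an arbitrary precision $\lambda\in\mathbb{R}^{+}$ and working with the associated parameters $N_{\lambda,M}$ and $\theta_{\lambda,M}$ for which $\Lambda(N_{\lambda,M},\theta_{\lambda,M},M)\leq\lambda$. Property (i) is the easiest: every element of $\psi_{\lambda,M}(\mathcal{Y})$ is a linear combination $\sum_{i=0}^{N_{\lambda,M}+1}\tilde{y}_{i}s_{i}$ whose $N_{\lambda,M}+2$ coefficients lie in the lattice $[Y]_{2\theta_{\lambda,M}}$, which is a countable set by the definition in (\ref{grid}); hence $\psi_{\lambda,M}(\mathcal{Y})$ is contained in the countable set of all such combinations and is therefore itself countable.

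The core of the argument is property (ii), the uniform bound $\Vert y-\psi_{\lambda,M}(y)\Vert_{\infty}\leq\lambda$ for every $y\in\mathcal{Y}$. First I would introduce the intermediate piecewise--linear interpolant $y_{1}:=\sum_{i}y(a+ih)s_{i}$ and split the error by the triangle inequality into an interpolation term $\Vert y-y_{1}\Vert_{\infty}$ and a quantization term $\Vert y_{1}-\psi_{\lambda,M}(y)\Vert_{\infty}$. For the interpolation term I would exploit that the splines $s_{i}$ are the hat functions interpolating the nodal values $y(a+ih)$, so on each subinterval of length $h$ the deviation of $y$ from its chord is controlled by the second derivative; using the hypothesis $\Vert D^{2}\,y\Vert_{\infty}\leq M$ together with the integral form of the remainder for linear interpolation gives $\Vert y-y_{1}\Vert_{\infty}\leq h^{2}M/8$. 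For the quantization term I would use the two defining properties of $\tilde{y}_{i}$ in (\ref{refpsi}), namely $\Vert \tilde{y}_{i}-y(a+ih)\Vert\leq\theta_{\lambda,M}$ and $\tilde{y}_{i}\in[Y]_{2\theta_{\lambda,M}}$, together with $\Vert s_{i}\Vert_{\infty}=1$, to obtain $\Vert y_{1}-\psi_{\lambda,M}(y)\Vert_{\infty}\leq(N_{\lambda,M}+2)\theta_{\lambda,M}$. Summing the two estimates yields exactly $\Lambda(N_{\lambda,M},\theta_{\lambda,M},M)\leq\lambda$, as defined in (\ref{lambda}), which is (ii).

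Property (iii) then follows with no extra work: since $\mathcal{A}_{\mathcal{Y},M}(\lambda)=\psi_{\lambda,M}(\mathcal{Y})$ by (\ref{Ay}), any $z$ in this set is of the form $z=\psi_{\lambda,M}(y)$ for some $y\in\mathcal{Y}$, and the bound already established in (ii) gives $\Vert y-z\Vert_{\infty}\leq\lambda$, exhibiting the required $y$.

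I expect the main obstacle to be the interpolation estimate $\Vert y-y_{1}\Vert_{\infty}\leq h^{2}M/8$, and in particular the need to handle the $PC^{2}$ regularity with care: the second derivative of $y$ need not exist everywhere but only off a finite set, while remaining bounded by $M$. The classical $h^{2}M/8$ bound still applies because $y$ is $C^{1}$ and the linear--interpolation remainder can be represented through $D^{2}\,y$, which is defined almost everywhere and bounded; I would verify that the finitely many nonsmooth points do not affect this estimate. The remaining steps—countability and the quantization bound—are routine once the partition--of--unity structure of the splines and the spacing $2\theta_{\lambda,M}$ of the lattice are invoked.
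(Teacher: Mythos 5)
Your proposal is correct and follows essentially the same route as the paper's proof: countability of the coefficient lattice for (i), a triangle--inequality split of $\Vert y-\psi_{\lambda,M}(y)\Vert_{\infty}$ into the interpolation error $h^{2}M/8$ and the quantization error $(N_{\lambda,M}+2)\theta_{\lambda,M}$ for (ii), and the observation that every element of $\psi_{\lambda,M}(\mathcal{Y})$ has a preimage in $\mathcal{Y}$ for (iii). The only difference is cosmetic: the paper obtains the interpolation bound by citing Theorem 2.6 of \cite{SplineBook}, whereas you re-derive it from the linear-interpolation remainder and explicitly address the $PC^{2}$ (rather than $C^{2}$) regularity, a point the paper glosses over.
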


\begin{proof}
Let $\mathcal{Z}$ be the collection of all functions of the form (\ref{refpsi}) with \mbox{$\tilde{y}_{1},\tilde{y}_{2},$ $...,\tilde{y}_{N_{\lambda,M}+1}\in [Y]_{2\theta_{\lambda,M}}$}. By construction, since $\psi_{\lambda,M}(\mathcal{Y})$ is a subset of $\mathcal{Z}$ that is countable, it is countable as well. Hence, condition (i) in Definition \ref{CountApprox} is satisfied. Let us now show that also condition (ii) is satisfied. Consider any $\lambda\in\mathbb{R}^{+}$ and any $y\in \mathcal{Y}$ and set $h_{\lambda,M}=(b-a)/(N_{\lambda,M}+1)$. By Theorem 2.6 in \cite{SplineBook} and the definition of $M$, the following inequality holds:
\begin{equation}
\Vert y- \pi \Vert_\infty \leq h_{\lambda,M}^{2} \Vert D^{2}y \Vert_{\infty}/8 \leq h_{\lambda,M}^{2} M/8,
\label{no1}
\end{equation}
where 
$
\pi(t)=\sum_{i=0}^{N_{\lambda,M}+1} y
\left(
h_{\lambda,M}i+a
\right)
s_{i}(t)$, $t\in [a,b]
$. 
Moreover by setting  
$
z(t)=\psi_{\lambda,M}(y(t))=\sum_{i=0}^{N_{\lambda,M}+1} \tilde{y}_{i}
s_{i}(t)$, $t\in [a,b]
$, 
the following chain of inequalities holds:
\begin{equation}
\begin{array}
[c]{lll}
\Vert
\pi - z\Vert_{\infty} =
&  
\left\Vert \sum_{i=0}^{N_{\lambda,M}+1}
\left(
y
\left(
h_{\lambda,M}i+a
\right)
-\tilde{y}_{i}
\right)s_{i}
\right\Vert_{\infty} \leq \\
& 
\sum_{i=0}^{N_{\lambda,M}+1} \left\Vert \left(
y
\left(
h_{\lambda,M}i+a
\right)
-\tilde{y}_{i}
\right)
s_{i} \right\Vert_{\infty} \leq \\
&
\sum_{i=0}^{N_{\lambda,M}+1} \left\Vert
y
\left(
h_{\lambda,M}i+a
\right)
-\tilde{y}_{i}
\right\Vert \Vert s_{i} \Vert_{\infty} \leq \\
&
\left(
\max_{i=0,1,...,N_{\lambda,M}+1}
 \left\Vert
y
\left(
h_{\lambda,M}i+a
\right)
-\tilde{y}_{i}
\right\Vert
\right)
\sum_{i=0}^{N_{\lambda,M}+1} \Vert s_{i} \Vert_{\infty} \leq \\
& 
\theta_{\lambda,M}(N_{\lambda,M}+2).\label{no2} 
\end{array}
\end{equation}
By combining inequalities in (\ref{no1}) and in (\ref{no2}) and by definition of 
$\theta_{\lambda,M}$ and $N_{\lambda,M}$, one gets:
\begin{eqnarray}
\Vert y-z  \Vert_{\infty} & \leq & \Vert y-\pi  \Vert_{\infty} +\Vert \pi-z  \Vert_{\infty} \nonumber\\
& \leq & 
h_{\lambda,M} ^{2} M/8+
\theta_{\lambda,M}(N_{\lambda,M}+2) 
= 
\Lambda(N_{\lambda,M},\theta_{\lambda,M},M) \leq \lambda.\nonumber
\end{eqnarray}
Hence, condition (ii) in Definition \ref{CountApprox} is satisfied. We conclude by showing that also condition (iii) holds. Consider any $z\in \mathcal{A}_{\mathcal{Y},M}(\lambda)$. By construction there exists $y\in\mathcal{Y}$ so that $z=\psi_{\lambda,M}(y)$. Hence, by following the same reasoning in proving condition (ii), condition (iii) can be proved as well.
\end{proof}


Under assumptions in (A.1-4), the regularity properties of the initial state in (A.5) propagate to the whole set of reachable states, or in other words, time--delay systems are invariant with respect to those properties in (A.5). More precisely:
\begin{lemma}
Consider a digital time--delay system $\Sigma=(X,\mathcal{X},\xi_{0},U,\mathcal{U},f)$, satisfying assumptions (A.1-5). Then for any $x_{\tau}\in R_{\tau}(\Sigma)$,  
\label{prop22}
\end{lemma}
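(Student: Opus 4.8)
The plan is to prove what the surrounding text announces: that the $PC^{2}$--regularity of the initial condition in (A.5) is propagated to every reachable state, i.e. for any $x_\tau\in R_\tau(\Sigma)$ one has $x_\tau\in PC^{2}([-\Delta,0];X)$ with $\|D^{2}x_\tau\|_\infty\leq M$, $M$ as in (\ref{cost}). This is exactly the hypothesis that Lemma \ref{prop1} requires in order to yield Proposition \ref{coroll}. Every element of $R_\tau(\Sigma)$ has the form $x_{k\tau}(\xi_0,u)$ with $k\in\mathbb{N}$ and $u\in\mathcal{U}_{k,\tau}$, so it suffices to argue on such a trajectory, viewing the reachable state $x_{k\tau}$ as the window $\{\theta\mapsto x(k\tau+\theta):\theta\in[-\Delta,0]\}$, i.e. the restriction of the continuous solution $x(\cdot)$ to the actual-time interval $[k\tau-\Delta,k\tau]$.

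First I would establish a priori bounds on the trajectory. Comparing with the trivial solution $x_t(0,0)=0$ (recall $f(0,0)=0$) in the $\delta$--ISS inequality (\ref{deltaISS}) and using that inputs take values in $U$, one gets $\|x_s\|_\infty\leq\beta(\|\xi_0\|_\infty,s)+\gamma(B_U)\leq\beta(B_X^{0},0)+\gamma(B_U)\leq B_X$ for all $s\geq 0$, so the whole trajectory stays in $X$ (after the state-space embedding discussed before (A.5) if needed). Restarting the same estimate from each sampling instant and invoking the assumption $\beta(B_X^{0},\tau)+\gamma(B_U)\leq B_X^{0}$ together with monotonicity of $\beta$, a short induction on $k$ gives $\|x_{k\tau}\|_\infty\leq B_X^{0}$; this is the invariance of the smaller ball alluded to in the text.

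The regularity step is where $\tau>2\Delta$ is decisive. Since $u\in\mathcal{U}_{k,\tau}$ is piecewise constant with breakpoints at $-r+j\tau$, the \emph{delayed} input $t\mapsto u(t-r)$ is constant on each open interval $((k-1)\tau,k\tau)$; call this value $c$. For $t$ in the window $[k\tau-\Delta,k\tau]$ the contributing history $x_t$ reaches back only to $t-\Delta\geq k\tau-2\Delta>(k-1)\tau$, so both the window and every history feeding it lie in a single interval of constant input. On this interval $t\mapsto f(x_t,c)$ is continuous, hence $\dot x$ is continuous and $x\in C^{1}$ on the window. Differentiating $\dot x(t)=f(x_t,c)$ once more, the chain rule with the Fr\'echet differential (A.3) gives $\ddot x(t)=J(x_t,c)[\dot x_t,0]$, with $\dot x_t$ the history of $\dot x$; this second derivative exists throughout the open window, so $x_\tau\in PC^{2}$.

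Finally I would bound the second derivative using (A.4): $\|\ddot x(t)\|\leq\|J(x_t,c)\|\,\|\dot x_t\|_\infty\leq B_J\|\dot x_t\|_\infty$, while from $\dot x(s)=f(x_s,c)-f(0,0)$ the Lipschitz property yields $\|\dot x(s)\|\leq\kappa(\|x_s\|_\infty+\|c\|)\leq\kappa(\beta(B_X,0)+\gamma(B_U)+B_U)$. Combining, $\|D^{2}x_\tau\|_\infty\leq B_J\kappa(\beta(B_X,0)+\gamma(B_U)+B_U)=M$, exactly the constant in (\ref{cost}), after which Lemma \ref{prop1} applied to $\mathcal{Y}=R_\tau(\Sigma)$ delivers Proposition \ref{coroll}. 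The main obstacle I anticipate is the regularity step: making rigorous the differentiation of the composite $t\mapsto f(x_t,c)$ in the Fr\'echet sense and verifying that $\dot x_t$ is a genuine element of $C^{0}$ with convergent difference quotients. This rests on continuity (indeed uniform continuity) of $\dot x$ on the window, which is itself secured only through $\tau>2\Delta$, and some care is needed near the window endpoints and at the finitely many input breakpoints.
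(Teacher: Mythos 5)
Your proposal is correct and follows essentially the same route as the paper's proof: a priori sup-bounds from the $\delta$--ISS inequality combined with the inequalities in (A.5), the condition $\tau>2\Delta$ to obtain $C^{1}$ regularity of the reachable window away from input breakpoints and the initial data, the second-derivative formula $\ddot{x}(t)=J(x_{t},c)\,(\dot{x}_{t},0)$ via the Fr\'echet differential, and the resulting bound by $M$ as in (\ref{cost}). You merely make explicit the induction over sampling steps $k$ and the breakpoint bookkeeping that the paper leaves implicit.
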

\[
x_{\tau}\in PC^2([-\Delta,0];X), 
\qquad
\Vert x_{\tau} \Vert_{\infty}\leq B_{X}^{0}, 
\qquad
\left \Vert D^{2} x_{\tau}\right \Vert_{\infty} \le M.
\]
\begin{proof}
First note that the function $t\to \dot{x}(t)$, $t\in
[0,\tau]$, is uniformly continuous in the (compact) set $[0,\tau]$. Since $\tau>2\Delta$, it follows that
\mbox{$x_{\tau+\theta}\in C^1([-\Delta,0];X)$}, $\theta\in (-\Delta,0)$
(i.e. the derivative $\dot{x}_{\tau+\theta}$ belongs to
$C^{0}([-\Delta,0];X))$. Moreover, by taking into account the Lipschitz property of $f$, the
$\delta$--ISS inequality, the bounds on initial state and input, the following inequality holds:
\[
\begin{array}
[c]{lll}
\Vert\dot{x}_{\tau+\theta} \Vert_{\infty} & = & \sup_{\alpha\in [-\Delta,0]} 
\Vert f(x_{\tau+\theta+\alpha},u(\tau+\theta+\alpha-r)) \Vert \nonumber\\
& \leq & \kappa\sup_{\alpha\in [-\Delta,0]}(\Vert x_{\tau+\theta+\alpha}  \Vert_{\infty} + \Vert u(\tau+\theta+\alpha-r)  \Vert) \nonumber\\
& \leq & \kappa(\beta (B_{X}^{0},0)+\gamma (B_{U})+B_{U}), 
\theta\in ]-\Delta,0[.\nonumber
\end{array}
\]
As far as the second derivative is concerned, the following
equality holds, for $\theta\in ]-\Delta,0[$,
\[
\frac{d^{2}x_{\tau}(\theta)}{d\theta^{2}}=J(x_{\tau+\theta},u(\tau+\theta-r))\left
(\begin{array}{cc}\dot{x}_{\tau+\theta}
\\ 0
\end{array} \right ).\nonumber
\]
By taking into accounts the bound on the Frech\'et differential,
and the bound on the derivative $\dot{x}_{\tau+\theta}$ and $ \dot
u(t)=0$, we obtain $\Vert D^{2} x_{\tau} \Vert_{\infty}\le M$. Finally by assumptions of $B_{X}^{0}$, $B_{U}$ and $\tau$ in (A.5), it is readily seen that $\Vert x_{\tau} \Vert_{\infty}\leq B_{X}^{0}$.
\end{proof}

By combining Lemmas \ref{prop1} and \ref{prop22}, the proof of Proposition \ref{coroll} holds as a direct consequence.

\end{document}